\documentclass[a4paper,11pt]{scrartcl}
\usepackage[utf8]{inputenc}
\usepackage{graphicx,tikz}
\usepackage{amsmath,amssymb,amsthm}

\newtheorem{de}{Definition}[section]
\newtheorem{theo}{Theorem}[section]
\newtheorem{prop}[theo]{Proposition}
\newtheorem{cor}[theo]{Corollary}

\newtheorem{conj}{Conjecture}

\title{On Multi-label radio Labelings of Graphs}
\author{Olivier Togni\\ LIB UR 7534, Université Bourgogne Europe, Dijon, France}

\begin{document}

\maketitle

\begin{abstract}
We propose and study a multi-labeling generalization of the $k$-radio labeling problem of graphs: given a graph $G$ and integers $k$ and $m$, assign to each vertex of $G$ a set of $m$ integer labels such that distinct labels assigned to a vertex differ by at least $k+1$, and the labels of any two distinct vertices $x$ and $y$ pairwise differ by at least $k+1-d(x,y)$, with $d(x,y)$ being the distance between vertices $x$ and $y$. We prove general bounds on the minimum span of such a labeling and more refined results for the case $k=1$. For paths and cycles, we derive sharp bounds on the minimum span when $k$ is small. We finally propose a conjecture for the minimum span on the path and for an almost-tight bound on the cycle.
\end{abstract}

\section{Introduction}
Frequency assignment problems in wireless communication systems have long been modeled using graph labelings. In this setting, vertices represent transmitters, and labels represent frequencies or communication channels, with a required separation between labels depending on the distance between the corresponding transmitters. This naturally leads to distance-constrained labelings, such as \(L(d_1,\ldots,d_t)\)-labelings~\cite{BPT00}, in which labels assigned to vertices at distance \(i\) must satisfy a prescribed separation \(d_i\).
In particular, radio labelings 
provide a natural example, with a separation requirement that decreases linearly with graph distance: a $k$-radio labeling is an $L(k,k-1,\ldots,1)$-labeling.

To better model modern wireless communication systems that use multiple channels or frequencies, as well as multi-stream technologies such as MIMO, we introduce a multi-label extension of radio labeling. Each vertex is assigned \(m\) labels, which can be interpreted as the frequencies or channels assigned to a transmitter. We impose separation constraints not only between labels assigned to distinct vertices, but also between labels assigned to the same vertex. 
\begin{de}
 For integers $m,k\geq 1$, an {\em $(m,k)$-radio labeling} of a graph $G$ is an assignment $\ell: V(G) \rightarrow \mathbb{N}^m$ such that for any two (possibly equal) vertices $x$ and $y$, and any $c_x\in\ell(x)$, $c_y\in\ell(y)$ with $(x,c_x)\neq (y,c_y)$, we have $$|c_x-c_y|\geq k+1-d_G(x,y),$$ where $d_G(x,y)$ is the distance between $x$ and $y$ in $G$. 
 
The minimum span (difference between the largest and the smallest label used) among all $(m,k)$-radio labelings is called the {\em $(m,k)$-radio number} and is denoted by $\lambda_k^m(G)$.
\end{de}


Thus, labels assigned to the same vertex must be separated by at least \(k+1\), while labels assigned to vertices at distances \(1,2,\ldots,k\) must be separated by at least \(k,k-1,\ldots,1\), respectively.

As an illustration, consider the graph of Figure~\ref{f1} with a $(3,2)$-radio labeling of span 18. Observe that the labels assigned to a vertex need not form a consecutive block in the global ordering of labels, i.e., labels assigned to different vertices may be (partially) interleaved.


\begin{figure}[ht]
\centering
\begin{tikzpicture}[
    vertex/.style={
        draw,
        rounded corners,
        minimum width=20mm,
        minimum height=8mm
    }
]

\node[vertex] (v0) at (0,0) {$\{10,14,18\}$};
\node[vertex] (v1) at (4,0) {$\{0,3,6\}$};
\node[vertex] (v2) at (8,0) {$\{8,12,16\}$};
\node[vertex] (v3) at (6,-2) {$\{1,4,7\}$};

\draw (v0)--(v1)--(v2)--(v3)--(v1);

\end{tikzpicture}
\caption{\label{f1}A graph with $(3,2)$-radio labeling of span 18.}
\end{figure}

Our model combines two graph-theoretic approaches to frequency assignment. For \(m=1\), it reduces to the usual \(k\)-radio labeling introduced by Chartrand et al. under the name radio coloring; see their survey~\cite{CZ07}. For $k=2$, it coincides with the well-studied $L(2,1)$-labeling problem introduced in 1992 by Griggs and Yeh~\cite{GY92}. 
Most of the existing results on $k$-radio labelings concern either values of $k$ equal to or close to the diameter of the graph~\cite{Liu08,JL12,BVZ17}, or arbitrary values of $k$ for simple graph classes such as paths, cycles, trees~\cite{KKT07}, product graphs~\cite{KKT08}, and distance graphs~\cite{CEH+13}. The case $k=3$ was studied by Chia et al.~\cite{CKL+11} and more recently by several groups of authors~\cite{Cal23,Zha24,BC26,HK26}.

On the other hand, assigning several labels to each vertex is the principle underlying graph multicoloring and, in particular, the weighted multicoloring problem with a co-site constraint introduced by Gerke \cite{Ger00}, where several frequencies may be assigned to a transmitter and separation constraints are imposed both between labels assigned to the same vertex and between labels assigned to adjacent vertices. Our model extends this framework by imposing distance-dependent constraints beyond adjacent vertices, following the philosophy of distance-constrained radio labelings, while assigning a constant number of labels per vertex.

We propose in this paper general bounds for the $(m,k)$-radio number of arbitrary graphs, depending in particular on the clique number and on the maximum degree. We compute the $(m,1)$-radio number of any bipartite graph and of any cycle. We determine the $(m,2)$-radio number of the path asymptotically and give an upper bound for the $(m,2)$-radio number of the cycle and for the $(m,3)$-radio number of the path. Finally, we propose two conjectures that unify and extend the results obtained for paths and cycles. 
The paper is organized as follows: Section 2 presents the notation and some general bounds on the $(m,k)$-radio number of a graph. Section 3 focuses on the case $k=1$, providing bounds and exact results for bipartite graphs and cycles. In Section 4, we prove results on the $(m,2)$- and $(m,3)$-radio numbers of the path and of the cycle. Section 5 concludes the paper by stating a conjecture on the $(m,k)$-radio number of the path and of the cycle.

\section{General bounds}
In the remainder of the paper, we use the terms "label" and "color" interchangeably.
In order to formalize the distance between integers in sets, we introduce the following notation: Two integers $p$ and $q$ are said to be {\em $s$-separated} if $|p-q|\ge s$. By extension, a set $X$ is {\em $s$-separated} if the elements of $X$ are pairwise $s$-separated and is {\em $s$-separated} from a set $Y$ if any element of $X$ is $s$-separated from any element of $Y$.

With this notation, an $(m,k)$-labeling $\ell$ of a graph $G$ satisfies that $\ell(x)$ is $(k+1-d_G(x,y))$-separated from $\ell(y)$ for any two vertices $x$ and $y$.


We can relate the $(m,k)$-radio number and the classical $k$-radio number in the following way:
\begin{prop}\label{propGen}
 For any graph $G$ and any integers $k,m$ and $p$, $1\le p < m$, we have $$\lambda_k^m(G)\le \lambda_k^{m-p}(G)+\lambda_k^{p}(G)+k+1.$$
\end{prop}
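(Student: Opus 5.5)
The plan is to concatenate two optimal labelings with a shift. Let $\ell_1$ be an $(m-p,k)$-radio labeling of $G$ of span $\lambda_k^{m-p}(G)$ and $\ell_2$ a $(p,k)$-radio labeling of span $\lambda_k^{p}(G)$. After translating, we may assume the smallest label of $\ell_1$ is $0$, so all its labels lie in $[0,\lambda_k^{m-p}(G)]$. We may likewise assume all labels of $\ell_2$ lie in $[0,\lambda_k^{p}(G)]$. Set $s=\lambda_k^{m-p}(G)+k+1$ and define
$$\ell(x)=\ell_1(x)\cup\{c+s : c\in \ell_2(x)\}.$$

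First, $\ell(x)$ has exactly $m$ elements. The first part has $m-p$ labels in $[0,s-k-1]$, and the second has $p$ labels in $[s,s+\lambda_k^p(G)]$, so the two parts are disjoint. The largest label is $\lambda_k^{m-p}(G)+k+1+\lambda_k^p(G)$ and the smallest is $0$. Hence, once validity is shown, the span is at most the claimed bound.

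Next, we check the separation condition for two labels $c_x\in\ell(x)$ and $c_y\in\ell(y)$ with $(x,c_x)\neq(y,c_y)$.
\begin{itemize}
\item If both labels come from the $\ell_1$ part, the required inequality $|c_x-c_y|\ge k+1-d_G(x,y)$ holds because $\ell_1$ is an $(m-p,k)$-radio labeling.
\item If both come from the shifted $\ell_2$ part, the inequality holds because translation preserves differences and $\ell_2$ is a $(p,k)$-radio labeling.
\item If one label comes from each part, say $c_x\le \lambda_k^{m-p}(G)$ and $c_y\ge s$, then $|c_x-c_y|\ge k+1\ge k+1-d_G(x,y)$, since $d_G(x,y)\ge 0$. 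This covers both $x=y$ and $x\neq y$.
\end{itemize}
So $\ell$ is an $(m,k)$-radio labeling, and the bound follows.

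There is no real obstacle. The only point needing care is the cross case, and the shift by exactly $k+1$ beyond the maximal label of $\ell_1$ is precisely what handles it, including the co-site case $x=y$, where separation $k+1$ is required. One should also note, if $\mathbb{N}$ is meant to include only nonnegative labels, that the translations keep all labels nonnegative.
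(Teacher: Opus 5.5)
Your proof is correct and follows the paper's approach. Like the paper, you take an optimal $(m-p,k)$-labeling and an optimal $(p,k)$-labeling, shift the second so it starts $k+1$ above the largest label of the first, and take the vertexwise union; your explicit three-case check, including the co-site cross case $x=y$, spells out what the paper calls ``easily verified.''
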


\begin{proof}
We construct a labeling $f$ of $G$ of span $\lambda_k^{m-p}(G)+\lambda_k^{p}(G)+k$ from an $(m-p,k)$-radio labeling $\ell_1$ of $G$ of span $\lambda_k^{m-p}(G)$ with labels from $\{0,1,\ldots, \lambda_k^{m-p}(G)\}$ and from a $(p,k)$-radio labeling $\ell_2$ of $G$ of span $\lambda_k^{p}(G)$ with labels from $\{\lambda_k^{m-p}(G)+k+1,\ldots, \lambda_k^{m-p}(G)+\lambda_k^{p}(G)+k+1\}$ by setting for any $v\in V(G)$,
$$f(v)=\ell_1(v)\cup \ell_2(v).$$
It is easily verified that $f$ is an $(m,k)$-radio labeling, since each of $\ell_1$ and $\ell_2$ satisfies the required constraints and $\ell_1, \ell_2$ are $(k+1)$-separated.
\end{proof}

\begin{cor}
 For any graph $G$ and any integers $k,m\ge 1$, $$\lambda_k^m(G)\le \lambda_k(G) + (m-1)(k+1).$$
\end{cor}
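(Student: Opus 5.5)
The natural route is to deduce the corollary from Proposition~\ref{propGen} by induction on $m$. The case $m=1$ is trivial, since $\lambda_k^1(G)=\lambda_k(G)$. For $m\ge 2$ I would apply Proposition~\ref{propGen} with $p=1$, which gives $\lambda_k^m(G)\le \lambda_k^{m-1}(G)+\lambda_k(G)+k+1$, and then unroll the recursion. Unrolled literally, however, the recursion yields only $m\lambda_k(G)+(m-1)(k+1)$. Reaching the stated bound therefore needs a sharper step: passing from $m-1$ to $m$ labels per vertex should cost only $k+1$, not $\lambda_k(G)+k+1$.

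To obtain that sharper step I would try a direct construction. Start from an optimal $k$-radio labeling $\ell$ with labels in $\{0,\dots,\lambda_k(G)\}$ and set $f(v)=\{\ell(v)+j(k+1): 0\le j\le m-1\}$. This has exactly the span $\lambda_k(G)+(m-1)(k+1)$. Labels on the same vertex differ by multiples of $k+1$, so the co-site condition holds. For distinct vertices $x,y$, comparing copies with the same index $j$ reduces the condition to the one for $\ell$.

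The main obstacle is the cross terms with $i\neq j$, namely $|\ell(x)-\ell(y)+(i-j)(k+1)|\ge k+1-d(x,y)$. These are not controlled by $\ell$ at all. Before writing anything further I would test the statement on the smallest cases, and this check is where I expect the plan to break. Take $G=K_3$, $k=1$, $m=2$. Here all six labels must be distinct, so the span is at least $5$, while the stated bound gives $\lambda_1(K_3)+2=2+2=4$. So the inequality as printed appears to fail, and the cross-term step cannot be repaired in general.

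What the argument does establish is the bound obtained by unrolling Proposition~\ref{propGen}. Equivalently, the blocks $\ell+j(\lambda_k(G)+k+1)$ are pairwise $(k+1)$-separated, which gives $\lambda_k^m(G)\le m\lambda_k(G)+(m-1)(k+1)$. I would present the corollary in that corrected form, and I do not see a valid proof of the statement as printed.
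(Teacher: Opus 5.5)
Your conclusion is correct: the inequality as printed is false, and your counterexample is valid. In $K_3$ with $k=1$ and $m=2$, any two of the six labels must differ: by at least $1$ across adjacent vertices, and by at least $2$ on a single vertex. Hence $\lambda_1^2(K_3)\ge 5$, whereas $\lambda_1(K_3)+2=4$.

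The paper's own proposition on complete graphs makes this systematic. Since $\lambda_k^m(K_n)=k(mn-1)$ and $\lambda_k(K_n)=k(n-1)$, the true value exceeds the printed bound by $(m-1)\bigl(k(n-1)-1\bigr)$. For $m\ge 2$ this gap is positive unless $n=2$ and $k=1$, and it is unbounded, so the failure is not an off-by-one artifact of the convention for $\lambda_k$. The statement also clashes with Section 4. Proposition~\ref{pPn2} with $m=1$ gives $\lambda_2(P_n)\le 4$, so the printed bound would force $\lambda_2^m(P_n)\le 3m+1$. For every $m\ge 2$ this contradicts Proposition~\ref{pPinf}, which gives $\lambda_2^m(P_n)\ge 5m-1$ for large $n$.

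The paper gives no separate proof of this corollary; it simply follows Proposition~\ref{propGen}. The only derivation available from that proposition is the one you describe: iterate it with $p=1$. Equivalently, stack copies of an optimal $k$-radio labeling shifted by multiples of $\lambda_k(G)+k+1$, exactly as in the proof of Proposition~\ref{propGen}. This yields $\lambda_k^m(G)\le m\lambda_k(G)+(m-1)(k+1)$. So the printed statement has evidently lost the factor $m$ in front of $\lambda_k(G)$, and proving the corrected form, as you propose, is the right resolution.
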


In order to propose a bound for the $(m,k)$-radio number of a graph $G$ in terms of its clique number $\omega(G)$, we first determine the exact value of the $(m,k)$-radio number of the complete graph $K_n$ for any $k,m$ and $n$.
\begin{prop}
For any $n\ge 2$, $k\ge 1$ and $m\ge 1$,  $$\lambda_k^m(K_n) = k(mn-1).$$
\end{prop}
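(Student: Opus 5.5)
In $K_n$ all distinct vertices are at distance $1$. Two labels on the same vertex must therefore differ by at least $k+1$, and two labels on distinct vertices by at least $k$. Hence the $mn$ labels used are pairwise distinct integers, and every two of them are at least $k$ apart. The plan is to prove the lower bound $k(mn-1)$ by a sorting argument and then give an interleaving construction that attains it.

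\emph{Lower bound.} Let $\ell$ be any $(m,k)$-radio labeling of $K_n$. Every pair of labels (same vertex or not) is $k$-separated, so the multiset of all $mn$ labels has no repetitions. Sorting them as $c_1<c_2<\dots<c_{mn}$, consecutive values satisfy $c_{i+1}-c_i\ge k$. Summing gives span $c_{mn}-c_1\ge k(mn-1)$.

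\emph{Upper bound.} Use the labels $0,k,2k,\dots,k(mn-1)$ and assign them cyclically: vertex $v_j$ ($0\le j\le n-1$) receives $\{k(j+in) : 0\le i\le m-1\}$. Any two labels differ by a positive multiple of $k$, which settles the constraint between distinct vertices. Two labels on the same vertex differ by a multiple of $kn$, and $kn\ge 2k\ge k+1$ because $n\ge 2$ and $k\ge 1$. This meets the same-vertex requirement. The span is exactly $k(mn-1)$.

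The only delicate point is the co-site constraint in the construction: consecutive sorted labels must not belong to the same vertex. Cyclic assignment with $n\ge 2$ guarantees this, and it is exactly where the hypothesis $n\ge2$ is used. For $n=1$ the answer would instead be $(m-1)(k+1)$.
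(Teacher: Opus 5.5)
Your proof is correct and follows the paper's argument exactly. The lower bound sorts the $mn$ labels and uses that consecutive gaps are at least $k$, and the upper bound uses the same cyclic assignment $v_j\mapsto\{k(j+in):0\le i\le m-1\}$, whose same-vertex labels are $kn$-separated with $kn\ge k+1$ because $n\ge 2$.
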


\begin{proof}
Let $f$ be an $(m,k)$-radio labeling of $K_n$ and let $f_1\le f_2\le ... \le f_{mn}$ be the ordered sequence of labels assigned to the vertices. First, as all vertices are neighbors of each other, we have $f_{i+1}\ge f_i +k$ for any $1\le i\le mn-1$. Hence, $\lambda_k^m(K_n)\ge f_{mn}-f_1 \ge k(mn-1)$. Next, let $V(K_n)=\{v_0,\ldots,v_{n-1}\}$. We define an $(m,k)$-radio labeling of $K_n$ of span $k(mn-1)$ by setting $f(v_i)=\{ik, (n+i)k,\ldots, ((m-1)n+i)k\}$ for $0\le i\le n-1$. It is easily seen that any two labels assigned to the same vertex are $nk$-separated (and therefore $(k+1)$-separated), while the sets assigned to distinct vertices are pairwise $k$-separated.
\end{proof}

\begin{cor}
    For any graph $G$ and integers $k\ge 1$ and $m\ge 1$, $$\lambda_k^m(G) \ge k(m\omega(G)-1).$$
\end{cor}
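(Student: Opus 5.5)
The plan is to deduce the corollary from the preceding proposition on $K_n$ by monotonicity of $\lambda_k^m$ under taking subgraphs with preserved distances. Let $K$ be a maximum clique of $G$, so $|K|=\omega(G)$. Take an optimal $(m,k)$-radio labeling $\ell$ of $G$ and restrict it to the vertices of $K$. We then check that the restriction is an $(m,k)$-radio labeling of $K_{\omega(G)}$.

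For this check, note that any two distinct vertices $x,y\in K$ are adjacent in $G$, so $d_G(x,y)=1$. This is also their distance in $K_{\omega(G)}$. Labels of the same vertex remain $(k+1)$-separated, since they are unchanged. Labels of distinct vertices $x,y\in K$ satisfy $|c_x-c_y|\ge k+1-d_G(x,y)=k$, which is exactly the requirement in $K_{\omega(G)}$. Hence the restriction is a valid labeling.

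The span of the restriction is at most the span of $\ell$, because its labels form a subset of those used by $\ell$. By the proposition on complete graphs, this gives $\lambda_k^m(G)\ge \lambda_k^m(K_{\omega(G)}) = k(m\omega(G)-1)$.

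The only point needing care is the degenerate case $\omega(G)=1$, i.e.\ $G$ edgeless; the proposition was stated only for $n\ge 2$. Here I would argue directly. A single vertex carries $m$ labels that are pairwise $(k+1)$-separated, so the span is at least $(m-1)(k+1)\ge k(m-1)$, and the bound still holds. With that case handled, there is no real obstacle: the whole argument is a restriction to the clique.
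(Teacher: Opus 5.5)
Your proof is correct and is exactly the argument the paper leaves implicit: restrict an optimal labeling of $G$ to a maximum clique, whose vertices are at distance $1$ in both $G$ and $K_{\omega(G)}$, and apply the proposition on $K_n$. Your separate treatment of $\omega(G)=1$ is a detail the paper omits; the lower-bound half of the $K_n$ proof also works verbatim for $n=1$.
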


Now, to obtain a bound depending on the maximum degree, we first determine the $(m,k)$-radio number of a star.

\begin{prop}\label{pK1n}
For any $n\ge 3$, and $m,k\ge 2$,  $$\lambda_k^m(K_{1,{n-1}}) = (mn-1)(k-1)+2m-2,$$ except for $n=3$ and $k=2$ for which $\lambda_2^m(K_{1,{2}}) =5m-2$.
\end{prop}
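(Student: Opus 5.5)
The plan is to prove matching lower and upper bounds by looking at the sorted sequence of all $mn$ labels, as in the proof for $K_n$. Let $c$ be the center, $u_1,\ldots,u_{n-1}$ the leaves, and $f_1\le\cdots\le f_{mn}$ the sorted multiset of labels. In $K_{1,n-1}$ all distances are at most $2$, so any two consecutive labels differ by at least $k-1$. A gap with a center label at one end and a leaf label at the other is at least $k$. A gap whose two labels belong to the same vertex is at least $k+1$.

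\emph{Lower bound.} Write the span as $\sum_{i=1}^{mn-1}(f_{i+1}-f_i)$ and charge each gap its excess over $k-1$. Every center label $f_i$ contributes an excess of at least $1$ to each of the gaps $f_i-f_{i-1}$ and $f_{i+1}-f_i$ that exist. Two center labels are never consecutive in a useful way, since such a gap already has excess $2$, so these contributions do not collapse. The $m$ center labels therefore give total excess at least $2m$, minus $1$ for each center label at an end of the sequence. This yields the bound $(mn-1)(k-1)+2m-2$.

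\emph{Exceptional case $n=3$, $k=2$.} To attain the general bound, both extreme labels must be center labels, all gap excess must come only from the center, and the leaf labels fall into $m-1$ blocks between consecutive center labels. In any block where a leaf repeats, consecutive labels of the same leaf must be separated by enough intermediate gaps. With $k=2$, two leaf labels with a single label between them are only $2(k-1)=2<3$ apart, so the two labels of a leaf need at least two labels between them. With only two leaves this cannot hold inside a block of length $\ge 3$, and the $2m$ leaf labels in $m-1$ blocks force some block of length at least $3$. So at least one extra unit is lost, giving $5m-2$. I expect this to be the main obstacle: one has to check carefully that every way of distributing the excess (center not at an end, same-leaf adjacency, etc.) costs at least one extra unit, and also verify the matching construction for $5m-2$.

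\emph{Upper bound.} Place center labels at both ends and use $m-1$ blocks of leaf labels between consecutive center labels, with the block sizes balanced so that the $m(n-1)$ leaf labels are distributed as evenly as possible. Within and across blocks, list the leaves cyclically $u_1,u_2,\ldots,u_{n-1},u_1,\ldots$. Use gap $k$ next to center labels and $k-1$ elsewhere. The checks needed are:
\begin{itemize}
\item the center labels are at least $(k+1)$-separated, which is clear;
\item each leaf receives exactly $m$ labels;
\item two labels of the same leaf are at least $(k+1)$ apart.
\end{itemize}
For the last check, two occurrences of the same leaf are separated by at least $n-2$ other labels, so their distance is at least $(n-1)(k-1)$. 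This is at least $k+1$ whenever $n\ge4$, or $n=3$ and $k\ge3$. When a center label lies between them, the distance is at least $2k\ge k+1$ in any case. The span equals the lower bound. For $n=3$, $k=2$, modify the construction by adding one extra unit of gap (for instance, put one center label at an end only), and verify span $5m-2$.
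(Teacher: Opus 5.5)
Your proof is correct and follows the paper's argument: the same excess-charging of the sorted label sequence gives the lower bound, and the same pigeonhole on the $m-1$ leaf blocks (three consecutive leaf labels force a repeated leaf) handles the exceptional case. The one difference is your general construction, which spreads the leaf labels over $m-1$ balanced blocks instead of the paper's single leaf block followed by center labels spaced $k+1$ apart; both give excess exactly $2m-2$, and your fix for $(n,k)=(3,2)$ (one center label at an end, blocks $u_1,u_2$) is exactly the paper's labeling $\{5i\},\{5i+2\},\{5i+3\}$, so the verification you left open goes through.
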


\begin{proof}
Let $s=(mn-1)(k-1)+2m-2$.
We first prove that $\lambda_2^m(K_{1,{2}}) \le s+1=5m-2$ and $\lambda_k^m(K_{1,{n-1}})\le s$ for $(n,k)\ne (3,2)$. Let $V(K_{1,n-1})=\{v_0,v_1,\ldots,v_{n-1}\}$, with $v_0$ being the center of the star. 

If $(n,k)= (3,2)$, then the labeling $\ell$ defined by 
$$\begin{aligned}
\ell(v_0)&=\{5i: 0\le i\le m-1\}, \\ 
\ell(v_1)&=\{5i+2: 0\le i\le m-1\}, \mbox{ and }\\
\ell(v_2)&=\{5i+3: 0\le i\le m-1\},
\end{aligned}$$ 
is an $(m,2)$-radio labeling of $K_{1,2}$ of span $5m-2$.

If $(n,k)\ne (3,2)$, then let $\ell$ be defined by, for $1\le i\le n-1$,

$$\begin{aligned}
\ell(v_0)&= \{0,s-(m-2)(k+1),s-(m-3)(k+1),\ldots, s-(k+1),s\},  \mbox{ and }\\
\ell(v_i)&=\{i(k-1)+1, (n-1+i)(k-1)+1,\ldots, ((m-1)(n-1)+i)(k-1)+1\}.
\end{aligned}$$ 

We now prove that $\ell$ satisfies the required separation constraints. First, in order to see that $\ell(v_0)$ is $(k+1)$-separated, we show that $s-(m-2)(k+1)\ge k+1$. Indeed, $s-(m-2)(k+1)= (mn-1)(k-1)+2m-2 -(m-2)(k+1)=m((k-1)(n-1)-2)+k+1\ge k+1$ since $k\ge 2$ and $n\ge 3$.
Moreover, for each $i$, $\ell(v_i)$ is $((k-1)(n-1))$-separated and $(k-1)(n-1)\ge k+1$ except for $(n,k)=(3,2)$.

Second, we check that labels $c_1$ and $c_2$ assigned to two distinct vertices $x$ and $y$ are sufficiently separated. 
First, observe that the labels of vertices $v_i$ are between the first and the second labels of $v_0$ in increasing order, i.e., between $0$ and $\ell_0=s-(m-2)(k+1)$. The smallest label of any vertex $v_i$ is the smallest label of $v_1$, that is, $k$, and is thus $k$-separated from $\ell(v_0)$. The largest label of any vertex $v_i$ is the largest label of $v_{n-1}$, i.e.,  $l_1=m(n-1)(k-1)+1$. We have 
$$\ell_0-\ell_1 = (mn-1)(k-1)+2m-2 - (m-2)(k+1) - m(n-1)(k-1)+1= k.$$

Hence $\ell(v_i)$ is $k$-separated from $\ell(v_0)$.
If now $x=v_i$ and $y=v_j$ for some $1\le i<j\le n-1$, then $c_1=(p_1(n-1)+i)(k-1)+1$ and $c_2=(p_2(n-1)+j)(k-1)+1$ for some $p_1,p_2$, $0\le p_1,p_2\le m-1$. Since $1\le j-i\le n-2$, then 
$$|c_2-c_1|= |((p_2-p_1)(n-1)+j-i)(k-1)|\ge k-1.$$ 
Thus $\ell(v_i)$ is $(k-1)$-separated from $\ell(v_j)$.

Now we prove that no better $(m,k)$-radio labeling $\ell$ exists. Let the $mn$ labels assigned by $\ell$ be ordered as $c_1\le c_2\le\cdots\le c_{mn}$. By definition, the minimum difference between two labels $c_i$ and $c_j$ must be $k-1$ if they are assigned to two different leaves,  $k$ if one is on $v_0$ and the other on a leaf, and $k+1$ if they are both assigned to the same vertex. Consequently, $c_{i+1}-c_i \ge k-1$ for any $i$. Let $\delta_i=c_{i+1}-c_i- (k-1)\ge 0$.

The span of $\ell$ is $$c_{mn}-c_1 = (mn-1)(k-1)+\sum_{i=1}^{mn-1} \delta_i.$$

We show that in any case, $\sum_{i=1}^{mn-1} \delta_i\ge 2m-2$, and that this minimum is attained only if both $c_1$ and $c_{mn}$ are assigned to $v_0$.
The $m$ labels of the center $v_0$ will give this $2m-2$ sum. 
Assume label $c_i$ is assigned to $v_0$. If $i>1$ then $\delta_{i-1}\ge 1$ if $c_{i-1}$ is assigned to a leaf, and $\delta_{i-1}\ge 2$ if $c_{i-1}$ is also assigned to $v_0$, in which case this $+2$ is counted as a contribution of $+1$ for both label $c_{i-1}$ and $c_i$. Moreover, if $i<mn$, then $\delta_{i}\ge 1$ if $c_{i+1}$ is assigned to a leaf and $\delta_{i}\ge 2$ if $c_{i+1}$ is also assigned to $v_0$ (counted also $+1$ for both labels). In summary, each label assigned to $v_0$ contributes a unit in some $\delta_i$ for each side where it possesses a neighboring label in the ordered sequence of labels (thus two units if the label is neither $c_1$ nor $c_{mn}$ and one unit otherwise). If two consecutive labels $c_i, c_{i+1}$ of the ordered sequence are assigned to the center $v_0$, then the contribution $\delta_i\ge 2$ allows us to count one unit for each of the two labels. As there are $m$ labels assigned to the center, and at most two of them can be at an end of the sequence, then the sum of contributions is at least $2(m-2)+2=2m-2$.


Now, we still have to treat the case $n,k= 3,2$, and to prove that in this case $\sum_{i=1}^{3m-1} \delta_i\ge 2m-1$. Suppose to the contrary, that there exists an $(m,2)$-labeling of $K_{1,2}$ with $\sum_{i=1}^{3m-1} \delta_i = 2m-2$. Then, by the above argument, labels $c_1$ and $c_{3m}$ must be assigned to the center $v_0$.

Suppose first that there exists some $i$, $2 < i < 3m-1$, for which at least three consecutive labels $c_{i-1},c_i,c_{i+1}$ in the ordered sequence are assigned to leaves. If $c_{i-1}$ and $c_i$ are assigned to the same leave, then by definition of the radio-labeling, $c_i-c_{i-1}\ge k+1=3$, hence $\delta_{i-1}\ge 2$. If $c_{i}$ and $c_{i+1}$ are assigned to the same leave, we similarly have $\delta_i\ge 2$. This contribution is independent of the $2m-2$ units of contributions of $ v_0$'s labels. Therefore, in total, we will have in this case $\sum_{i=1}^{3m-1} \delta_i\ge 2m$, a contradiction. Otherwise, labels $c_{i-1},c_i,c_{i+1}$ alternate between the two leaves, hence $c_{i-1}$ and $c_{i+1}$ are assigned to the same leave and thus $c_{i+1}-c_{i-1}\ge 3$, implying $\delta_{i-1}+\delta_{i}\ge 1$. We thus obtain $\sum_{i=1}^{3m-1} \delta_i\ge 2m-1$, a contradiction.

Now, if for any $i$, at least one of $c_{i-1},c_i,c_{i+1}$ is assigned to the center $v_0$, then we can see that there are not enough labels of $v_0$ to do that. Indeed, since the two extremal labels are assigned to the center $v_0$, there remain only $m-2$ labels of $v_0$ to delimit blocks of consecutive leaf labels. Hence, we can place at most $2(m-1)=2m-2$ leaf labels, but we have $2m$ such labels, a contradiction.

Therefore $\sum_{i=1}^{3m-1} \delta_i> 2m-2$ and thus, $\lambda_2^m(K_{1,{2}}) > 3m-1 + 2m-2= 5m-3$.

\end{proof}

Since any graph of maximum degree $\Delta$ contains a subgraph isomorphic to $K_{1,\Delta}$, Proposition~\ref{pK1n} gives the following bound for $\Delta\ge 2$:
\begin{cor}
    For any graph $G$ of maximum degree $\Delta\ge 2$ and for any integer $m,k\ge 2$, $$\lambda_k^m(G) \ge (m(\Delta+1)-1)(k-1)+2m-2.$$
\end{cor}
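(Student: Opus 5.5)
The plan is to combine a monotonicity property of $\lambda_k^m$ under taking subgraphs with Proposition~\ref{pK1n} applied to the star $K_{1,\Delta}$.

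\textbf{Monotonicity.} Let $H$ be any subgraph of $G$, not necessarily induced, and let $\ell$ be an $(m,k)$-radio labeling of $G$ of span $\lambda_k^m(G)$. Restrict $\ell$ to $V(H)$.
\begin{itemize}
\item For $x,y\in V(H)$, every path in $H$ is also a path in $G$, so $d_H(x,y)\ge d_G(x,y)$.
\item Hence $k+1-d_H(x,y)\le k+1-d_G(x,y)$.
\item Since $\ell(x)$ is $(k+1-d_G(x,y))$-separated from $\ell(y)$, it is also $(k+1-d_H(x,y))$-separated from $\ell(y)$.
\item For $x=y$ both distances are $0$, so the internal $(k+1)$-separation of each $\ell(x)$ is preserved.
\end{itemize}
So the restriction is an $(m,k)$-radio labeling of $H$. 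Its labels form a subset of those used by $\ell$, so its span is at most the span of $\ell$. This gives $\lambda_k^m(H)\le\lambda_k^m(G)$.

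\textbf{Applying the star result.} Take a vertex $v$ of degree $\Delta$ in $G$. Then $v$ together with $\Delta$ of its neighbours and the incident edges is a subgraph isomorphic to $K_{1,\Delta}=K_{1,n-1}$ with $n=\Delta+1\ge 3$. Since $m,k\ge 2$, Proposition~\ref{pK1n} applies.
\begin{itemize}
\item If $(n,k)\ne(3,2)$, it gives $\lambda_k^m(K_{1,\Delta})=(m(\Delta+1)-1)(k-1)+2m-2$.
\item In the exceptional case $\Delta=2$, $k=2$, it gives $\lambda_2^m(K_{1,2})=5m-2$. The claimed formula evaluates there to $(3m-1)\cdot 1+2m-2=5m-3<5m-2$, so the inequality still holds, with one unit to spare.
\end{itemize}
Combining either case with the monotonicity step yields the stated bound.

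\textbf{Main point of care.} The only step needing attention is the direction of the distance inequality for a non-induced subgraph. Distances can only grow when passing to $H$, so the constraints only become weaker, and the argument goes through. No other obstacle is expected.
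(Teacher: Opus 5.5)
Your proof is correct and follows the paper's route: the paper notes that $G$ contains $K_{1,\Delta}$ as a subgraph and invokes Proposition~\ref{pK1n}. You also make explicit two points the paper leaves implicit: $\lambda_k^m$ is monotone under not-necessarily-induced subgraphs because distances can only increase, and the exceptional case $\Delta=k=2$ still satisfies the bound since $5m-2\ge 5m-3$.
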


\section{$(m,1)$-radio labeling}
Quite naturally, the $(m,1)$-radio number of a graph is related to the $m$-fold chromatic number $\chi_m(G)$ and fractional chromatic numbers $\chi_f(G)$. The $m$-fold chromatic number $\chi_m(G)$ of a graph $G$ is the least $k$ such that vertices can be given $m$-subsets of $\{1,\ldots,k\}$ in such a way that adjacent vertices have disjoint sets of colors. The fractional chromatic number of $G$ can be defined by $\chi_f(G)=\inf_m \chi_m(G)/m$.

\begin{prop}\label{p1}
For any non-empty graph $G$ and integer $m\ge 1$, $$m\chi_f(G)-1\le \chi_{m}(G)-1 \le \lambda_1^m(G)\le m\chi(G)-1.$$
\end{prop}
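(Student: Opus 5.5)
The chain of inequalities splits into three independent parts, and I will prove them from left to right.

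The leftmost inequality $m\chi_f(G)-1\le \chi_m(G)-1$ follows from the definition $\chi_f(G)=\inf_{m'}\chi_{m'}(G)/m'$. Taking $m'=m$ gives $\chi_f(G)\le \chi_m(G)/m$, so $m\chi_f(G)\le\chi_m(G)$.

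For the middle inequality $\chi_m(G)-1\le\lambda_1^m(G)$, I will turn an $(m,1)$-radio labeling into an $m$-fold coloring. Take an optimal $(m,1)$-radio labeling $\ell$ and shift it so that its labels lie in $\{0,1,\ldots,\lambda_1^m(G)\}$. With $k=1$, the definition requires three things:
\begin{itemize}
\item two labels of the same vertex are $2$-separated, so in particular distinct, and $\ell(v)$ is a genuine $m$-set;
\item for adjacent $x,y$, the sets $\ell(x)$ and $\ell(y)$ are $1$-separated, i.e.\ disjoint;
\item vertices at distance at least $2$ face no constraint.
\end{itemize}
Hence $\ell$, read as an assignment of $m$-subsets of a palette of size $\lambda_1^m(G)+1$, is an $m$-fold coloring. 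This gives $\chi_m(G)\le\lambda_1^m(G)+1$.

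For the upper bound $\lambda_1^m(G)\le m\chi(G)-1$, I will build a labeling of span $m\chi-1$ from a proper coloring $c:V(G)\to\{0,\ldots,\chi-1\}$, where $\chi=\chi(G)$. The naive choice of consecutive blocks $\{c(v)m,\ldots,c(v)m+m-1\}$ fails, because labels on the same vertex must be $2$-separated. Instead I interleave the color classes and set
$$\ell(v)=\{c(v)+j\chi : 0\le j\le m-1\}.$$
The checks are as follows:
\begin{itemize}
\item Within $\ell(v)$, consecutive labels differ by $\chi$. This is $2$-separated provided $\chi\ge2$, which holds because $G$ is non-empty.
\item Two labels $c(x)+j\chi$ and $c(y)+j'\chi$ are congruent modulo $\chi$ only if $c(x)=c(y)$. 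So adjacent vertices, which have different colors, receive disjoint sets and are $1$-separated.
\item Non-adjacent vertices have no constraint.
\end{itemize}
The labels range from $0$ to $(\chi-1)+(m-1)\chi=m\chi-1$, so the span is at most $m\chi-1$.

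I do not expect a real obstacle. The only subtle points are noticing that the upper bound needs the interleaved construction rather than consecutive blocks, and that non-emptiness of $G$ is what guarantees $\chi\ge2$.
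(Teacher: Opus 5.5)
Your proof is correct and follows the paper's argument essentially step for step. The lower bounds come from $\chi_f(G)\le\chi_m(G)/m$ and from reading a shifted $(m,1)$-radio labeling as an $m$-fold coloring with palette $\{0,\ldots,\lambda_1^m(G)\}$, and the upper bound uses the same interleaved labeling $\ell(v)=\{c(v)+j\chi(G): 0\le j\le m-1\}$, with non-emptiness guaranteeing $\chi(G)\ge 2$.
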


\begin{proof}
For the lower bound, we use the fact that $\chi_m(G)\ge m\chi_f(G)$, see the above definitions. As an $(m,1)$-radio labeling is nothing but an $m$-fold coloring starting with color 0 instead of 1, we have $\lambda_1^m(G)\ge \chi_m(G)-1 \ge m\chi_f(G)-1$.

For the upper bound, let $c:V(G)\rightarrow \{0,\ldots, \chi(G)-1\}$ be a coloring of $G$. We define an $(m,1)$-radio labeling $\ell$ by setting 
$$\ell(v)=\{i\chi(G)+c(v) : 0\le i\le m-1\}.$$  
Then, trivially, the set assigned to any vertex is 2-separated (in fact $\chi(G)$-separated, and $\chi(G)\ge 2$ since $G$ is non-empty) and the sets of two adjacent vertices $x$ and $y$ are disjoint since $c$ is a proper coloring.

The span of this labeling is $(m-1)\chi(G) + \chi(G) -1 = m\chi(G)-1$. 
\end{proof}

Remark that one might be tempted to replace the bounds of Proposition~\ref{p1} by the equality $\lambda_1^m(G)=\lceil m\chi_f(G)\rceil-1$. However, this equality does not hold in general because of the additional 2-separation constraint on the colors assigned to each vertex. 
For instance for the Petersen graph $P$, we know that $\chi_f(P)=\chi_2(P)/2=5/2$. But all the 2-subsets of $\{1,\ldots,5\}$ are used on some vertex in any 2-fold 5-coloring of $P$, so no permutation of colors will 2-separate all subsets. However, there are graphs for which this equality holds. We will see next that this is the case for cycles.

For bipartite graphs, the bounds of Proposition~\ref{p1} simplify to an equality (since $\chi(G)=2$ implies $\chi_f(G)=2$):
\begin{cor}\label{c1}
For any bipartite graph $B$ and any $m\ge 1$, we have $\lambda_1^m(B)=2m-1$.
\end{cor}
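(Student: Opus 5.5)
We derive the corollary directly from Proposition~\ref{p1}, and we have to be careful with the word ``bipartite'', since the Proposition assumes $G$ is non-empty. Suppose first that $B$ has at least one edge. Then $\chi(B)=2$. We will also use the standard fact that $\chi_f(B)\ge\omega(B)=2$. Together with $\chi_f(B)\le\chi(B)=2$, this gives $\chi_f(B)=2$.

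Substituting these values into the chain of inequalities of Proposition~\ref{p1} gives
$$2m-1=m\chi_f(B)-1\le \lambda_1^m(B)\le m\chi(B)-1=2m-1,$$
so the two bounds coincide and $\lambda_1^m(B)=2m-1$.

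There is a more elementary way to see the lower bound without fractional colorings. Take an edge $xy$. The $2m$ labels in $\ell(x)\cup\ell(y)$ must be pairwise distinct: labels on distinct adjacent vertices must be $1$-separated, and labels on the same vertex must be $2$-separated. Hence the span is at least $2m-1$. For the upper bound, the explicit construction in the proof of Proposition~\ref{p1} with $\chi(B)=2$ works directly. It assigns $\{2i: 0\le i\le m-1\}$ to one side of the bipartition and $\{2i+1: 0\le i\le m-1\}$ to the other. Each set is $2$-separated, and the two sets are disjoint.

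The only delicate point is the degenerate case of a graph with no edges. There, all $m$ labels of a vertex only need to be $2$-separated, and vertices at infinite distance impose no constraint, so the span is $2(m-1)$. The statement should therefore be read for bipartite graphs with at least one edge, which we will state explicitly as the hypothesis matching the non-emptiness assumption of Proposition~\ref{p1}. Apart from this, there is no real obstacle: the argument is a direct specialization of the Proposition.
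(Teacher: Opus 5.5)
Your proposal is correct and follows the paper's route: the paper also just substitutes $\chi(B)=\chi_f(B)=2$ into Proposition~\ref{p1}, and your edge-based lower bound and explicit two-class construction simply unpack the two bounds of that proposition. Your caveat is also right: for an edgeless graph the value is $2m-2$, so the corollary needs $B$ to have an edge, which is the paper's implicit assumption $\chi(B)=2$ and matches the non-emptiness hypothesis of Proposition~\ref{p1}.
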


For the cycle, we determine the exact value of $\lambda_1^m$:
\begin{prop}\label{l1mcycle}
For any $n\ge 2$ and $m\ge 1$,  $$\lambda_1^m(C_n) = \lceil m\chi_f(C_n)\rceil-1=\left\{
\begin{array}{ll}2m-1, & \mbox{ if $n$ is even},\\
\left\lceil\frac{2n}{n-1}m\right\rceil-1, & \mbox{ if $n$ is odd.}
\end{array}\right.$$
\end{prop}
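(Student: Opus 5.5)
The even case is immediate: an even cycle is bipartite, so Corollary~\ref{c1} gives $2m-1$. (For $n=2$ we read $C_2$ as $K_2$; note that $\chi_f=2$ there as well.) For odd $n=2q+1$ we have $\chi_f(C_n)=2n/(n-1)=2+1/q$. The lower bound $\lambda_1^m(C_n)\ge \lceil m\chi_f(C_n)\rceil-1$ then comes straight from Proposition~\ref{p1}: it gives $\lambda_1^m\ge\chi_m(C_n)-1$ and $\chi_m\ge m\chi_f$, and $\chi_m$ is an integer. So the real work is an explicit upper-bound construction. We must $m$-fold color $C_{2q+1}$ with colors $\{0,\dots,N-1\}$, where $N=\lceil m(2q+1)/q\rceil$. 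The extra requirement is that the labels at each vertex be pairwise $2$-separated, i.e. no vertex receives two consecutive integers.

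For the construction I will use the standard circular approach. Set $r=N$ and think of the colors $0,\dots,N-1$ as positions. Walk around the cycle $v_0,v_1,\dots,v_{n-1}$ and give $v_j$ a block of $m$ colors starting at position $jm$. To make the labels at a vertex non-consecutive, I will instead give $v_j$ the arithmetic-type set
\[
\ell(v_j)=\{\lfloor (jm+i)\,N/(mn)\rfloor\cdot\ldots\}.
\]
More concretely, the plan is as follows. List the $mn$ "slots" $s=jm+i$, with $0\le i<m$, in a cyclic order that interleaves vertices. A convenient choice is to assign slot index $t$ to vertex $v_{t\cdot 2 \bmod n}$-style, i.e. follow the cyclic order $v_0,v_2,v_4,\dots$ (stepping by 2 around the odd cycle). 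This order visits every vertex, and consecutive entries are never adjacent, except possibly at the wrap-around. The $t$-th slot in this order (for $t=0,\dots,mn-1$) then receives color $\lfloor tN/(mn)\rfloor$. Within one full pass $v_0,v_2,\dots$ of length $n$, each vertex appears once, so the colors of a single vertex are about $nN/(mn)=N/m>2$ apart, which gives the $2$-separation within a vertex.

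The checks, in order, are these. First, $2$-separation at each vertex: consecutive occurrences of a vertex are $n$ slots apart, and $nN/(mn)\ge (2q+1)/q>2$, so the floors differ by at least $2$. Second, adjacent vertices never share a color. In the stepping-by-2 order, $v_j$ and $v_{j+1}$ are $q+1$ or $q$ positions apart, i.e. roughly half a pass. Since $N/(mn)\ge 1/q$, any $q$ consecutive slots already span a full unit of color, so slots $q$ apart get distinct floors; this must also hold across the wrap-around from $t=mn-1$ back to $0$. Third, the largest color used is $\lfloor (mn-1)N/(mn)\rfloor\le N-1$, so the span is $N-1$ as required.

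The main obstacle is the second check at the wrap-around and the exact distance-$q$ argument. With $N/(mn)$ possibly only barely at least $1/q$, I must verify that two slots $q$ apart (the closer adjacency) never land in the same floor class. The inequality $qN\ge mn$ handles non-wrapping pairs. For wrapping pairs the slot distance is measured cyclically, with colors $N-1$ versus small colors, and this needs the condition that color differences are taken non-cyclically. I would treat it by checking that the vertex occupying the last slots is never adjacent to the one at the first slots, possibly after choosing the starting vertex and direction suitably. If that fails, I would fall back on an explicit formula for small residues of $m \bmod q$ combined with Proposition~\ref{propGen}-style concatenation of blocks.
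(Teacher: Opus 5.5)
Your construction is correct, and your argument for ``non-wrapping pairs'' already covers every case. The obstacle you flag at the end does not exist. The slots are indexed linearly by $t\in\{0,\ldots,mn-1\}$, and the labels $\lfloor tN/(mn)\rfloor$ are compared as integers, never modulo $N$, so no pair ``wraps''.

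Concretely, set $\ell(v_j)=\{\lfloor tN/(mn)\rfloor : 0\le t<mn,\ 2t\equiv j \pmod n\}$. Any slot $t$ of $v_j$ and any slot $t'$ of $v_{j+1 \bmod n}$ satisfy $t'-t\equiv q+1 \pmod n$. Hence $t'-t\in\{\ldots,-q-n,\,-q,\,q+1,\,q+1+n,\ldots\}$ and $|t'-t|\ge q$. This holds for every edge, including $v_{n-1}v_0$, because the vertex indices are already taken mod $n$. Your inequality $qN\ge mn$ therefore separates the floors of all adjacent pairs. Slots of the same vertex differ by a positive multiple of $n$, so $\lfloor N/m\rfloor\ge 2$ gives the $2$-separation at each vertex. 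The fallback plans are unnecessary.

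You should also delete the abandoned first formula. The remark that consecutive entries of the step-$2$ order are never adjacent is false for $n=3$, where $v_0$ and $v_2$ are adjacent; it is not used anywhere.

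Compared with the paper: writing $n=2p+1$, the paper splits into $m\le p$ and $m>p$.
In the first case it cyclically shifts $\{1,3,\ldots,2m-1\}$ by $+1$ modulo $2m+1$ along $v_0,\ldots,v_{2m}$, then alternates two fixed sets on the remaining vertices.
In the second case it uses $a_{i,j}=2j+\lfloor (j+t_i)/p\rfloor$ with $t_i=ip \bmod (2p+1)$. It then checks the subcases $t_i\le p$ and $t_i>p$, and the closing edge, separately.

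Your single equidistribution formula along the step-$2$ ordering treats all $m$ uniformly. Its verification reduces to one residue computation plus the two inequalities $N/m>2$ and $qN\ge mn$. These are exactly what $N=\lceil mn/q\rceil$ supplies, which makes it transparent why the fractional lower bound is attained. The paper's route gives more explicitly structured labelings, at the price of a case split and more index bookkeeping.
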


\begin{proof}
Let $V(C_n)=\{v_0,\ldots,v_{n-1}\}$, with $v_i$ adjacent with $v_{i\pm 1 \bmod{n}}$. For even $n$, the result follows from Corollary~\ref{c1}.
For odd $n$, we have by Proposition~\ref{p1} $\lambda_1^m(C_n)\ge \lceil m\chi_f(C_n)\rceil-1= \lceil 2mn/(n-1)\rceil-1$.
To prove the equality for $n=2p+1$, we  exhibit an $(m,1)$-radio labeling of span $s=\lceil2mn/(n-1)\rceil-1= \lceil m(2p+1)/p\rceil-1=2m+\lceil m/p\rceil -1$ by considering separately the two cases $m\le p$ and $m>p$.

\paragraph{Case 1.  $m\le p$.} Note that $\lceil m/p\rceil=1$, thus $s = 2m$. Set the label of the vertex $v_0$ as
$$\ell(v_0)=\{1,3,\ldots, 2m-1 \}.$$

Now, for any $0\le i\le 2m-1$, the labels assigned to $v_{i+1}$ are obtained by a +1 shift (modulo $s+1=2m+1$) of the label of $v_i$, that is, we set,  
$$\ell(v_{i+1})=\{l+1 \bmod{2m+1}: l\in \ell(v_{i})\}.$$

For the remaining vertices, we alternate between the labels of $v_0$ and of $v_{2m}$: for any $1\le j\le p-m$,
$$\ell(v_{2m+2j-1})= \ell(v_0) \mbox{ and } \ell(v_{2m+2j})= \ell(v_{2m}).$$

Since $\ell(v_{2m})= \{0,2,\ldots, 2m-2\}=\ell(v_{2p})$, we have that $\ell(v_{2p})\ne \ell(v_0)$ and thus the labels of any two consecutive vertices $v_i$ and $v_{i+1}$ are disjoints. Hence $\ell$ is a $(m,1)$-radio labeling of $P_n$. An example for $m=3$ and $p=5$ is given in Figure~\ref{t1m1} (left).

\paragraph{Case 2. $m>p\ge 1$.} Let $r=\left\lceil\frac{m}{p}\right\rceil$ and define a labeling $\ell$ of $C_{2p+1}$ of span $s=2m+\lceil\frac{m}{p}\rceil-1$ by setting for any $i$, $0\le i\le 2p$:
$$
\ell(v_i)=L_i=\{a_{i,0},a_{i,1},\ldots,a_{i,m-1}\}
$$

where for $j\in\{0,\ldots,m-1\}$, $a_{i,j}=2j+\left\lfloor\frac{j+t_i}{p}\right\rfloor$, with $t_i=ip\mod{2p+1}$.

Now, we prove that $\ell$ is an $(m,1)$-radio labeling of $C_n$. We first observe that, for every $i$ and every $j<m-1$,

$$
a_{i,j+1}-a_{i,j}=2+\left\lfloor\frac{j+1+t_i}{p}\right\rfloor - \left\lfloor\frac{j+t_i}{p}\right\rfloor \in\{2,3\}.
$$

Consequently, each $L_i$ is $2$-separated.

Moreover, $a_{i,0}\geq0$ while

$$
\begin{aligned}
a_{i,m-1}
&\leq
2m-2+
\left\lfloor\frac{m-1+2p}{p}\right\rfloor\\
&=
2m+\left\lfloor\frac{m-1}{p}\right\rfloor\\
&=
2m+\left\lceil\frac{m}{p}\right\rceil-1
=s.
\end{aligned}
$$

Thus, every color belongs to $\{0,\ldots,s\}$.

It remains to be proved that two consecutive sets are disjoint. Since $t_{i+1}\equiv t_i+p\pmod{2p+1}$,  there are two cases.

Suppose first that $0\leq t_i\leq p$. Then $t_{i+1}=t_i+p$, and hence

$$
a_{i+1,j}=2j+\left\lfloor\frac{j+t_i+p}{p}\right\rfloor=a_{i,j}+1.
$$

Moreover, since consecutive elements of $L_i$ differ by at least $2$, we have $L_i\cap L_{i+1}=\emptyset$.

Now suppose that $p+1\leq t_i\leq2p$. In this case, $t_{i+1}=t_i-p-1$.
Thus, for every $j<m-1$, we have
$$
\begin{aligned}
a_{i+1,j+1}
&=
2j+2+
\left\lfloor
\frac{j+1+t_i-p-1}{p}
\right\rfloor\\
&=
2j+2+
\left\lfloor
\frac{j+t_i-p}{p}
\right\rfloor\\
&=
a_{i,j}+1.
\end{aligned}
$$

Hence $a_{i+1,j}<a_{i,j}<a_{i+1,j+1}$ and therefore $L_i\cap L_{i+1}=\emptyset$.

Finally, the same argument applies to the edge $v_{2p}v_0$. Indeed, $t_{2p}=2p^2\bmod{2p+1}=p+1$, so $t_0=t_{2p}-p-1$ and we obtain, as above, that $a_{0,j}< a_{2p,j} < a_{0,j+1}$ and $L_{2p}\cap L_{0}=\emptyset$.

Thus the sets $L_0,\ldots,L_{2p}$ define the required $(m,1)$-radio labeling of $C_{2p+1}$.
Figure~\ref{t1m1} (right) shows an example of this labeling on the cycle $C_7$ for $m=6$. 
\end{proof}

\begin{figure}
\begin{center}
\begin{tabular}{c|ccc}
$v_0$&1 & 3 & 5 \\ 
$v_1$&2 & 4 & 6 \\ 
$v_2$&3 & 5 & 0 \\ 
$v_3$&4 & 6 & 1 \\ 
$v_4$&5 & 0 & 2 \\ 
$v_5$&6 & 1 & 3 \\ 
$v_6$&0 & 2 & 4 \\ 
$v_7$&1 & 3 & 5 \\ 
$v_8$&0 & 2 & 4 \\ 
$v_9$&1 & 3 & 5 \\ 
$v_{10}$&0 & 2 & 4 \\ 
\end{tabular} 
\hspace*{2cm}
\begin{tabular}{c|cccccc}
$v_0$&0 & 2 & 4 & 7 & 9 & 11 \\ 
$v_1$&1 & 3 & 5 & 8 & 10 & 12 \\ 
$v_2$&2 & 4 & 6 & 9 & 11 & 13 \\ 
$v_3$&0 & 3 & 5 & 7 & 10 & 12 \\ 
$v_4$&1 & 4 & 6 & 8 & 11 & 13 \\ 
$v_5$&0 & 2 & 5 & 7 & 9 & 12 \\ 
$v_6$&1 & 3 & 6 & 8 & 10 & 13 \\ 
\end{tabular} 
\end{center}
\caption{\label{t1m1}An optimal $(3,1)$-labeling of $C_{11}$ of span 6 (on the left) and optimal $(6,1)$-radio labeling of $C_7$ of span $13$ (on the right).}
\end{figure}

\section{$(m,k\ge 2)$-labeling of paths and cycles}
The $(m,1)$-radio number of the path and of the cycle is determined by Corollary~\ref{c1} and Proposition~\ref{l1mcycle}, respectively.
In this section, we present lower and upper bounds for the $(m,2)$-radio number of the path that asymptotically coincide, and an upper bound for the $(m,2)$-radio number of the cycle. For $k=3$, we provide an upper bound for the $(m,3)$-radio number of the path.

\begin{prop}\label{pPn2}
For any $n\ge 1$ and $m\ge 1$,  $$\lambda_2^m(P_n) \le 5m-1.$$
\end{prop}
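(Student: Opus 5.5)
The plan is to build an explicit labeling that is periodic with period $5$ in the labels, in the same spirit as the upper bound of Proposition~\ref{p1}. Let $V(P_n)=\{v_0,\ldots,v_{n-1}\}$ with $v_i$ adjacent to $v_{i+1}$. For each $i$, let $r_i=2i \bmod 5\in\{0,\ldots,4\}$ and set
$$\ell(v_i)=\{r_i+5j : 0\le j\le m-1\}.$$
All labels then lie in $\{0,\ldots,5m-1\}$, so the span is at most $5m-1$. The base sequence $r_i$ is the classical $L(2,1)$-labeling pattern of period $5$, namely $0,2,4,1,3,0,2,\ldots$.

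It remains to check the three separation constraints required for $k=2$.

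\emph{Same vertex.} Distinct labels of one vertex differ by a nonzero multiple of $5$, hence they are $5$-separated and in particular $3$-separated.

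\emph{Circular distance.} For two vertices $x=v_i$ and $y=v_{i'}$, any labels $c_x=r_i+5j$ and $c_y=r_{i'}+5j'$ satisfy $|c_x-c_y|\ge \delta(r_i,r_{i'})$. Here $\delta(a,b)=\min(|a-b|,5-|a-b|)$ is the circular distance in $\mathbb{Z}_5$. This holds because $c_x-c_y\equiv r_i-r_{i'}\pmod 5$, and every integer in that residue class has absolute value at least $\delta(r_i,r_{i'})$.

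\emph{Adjacent vertices.} We have $r_{i+1}-r_i\equiv 2 \pmod 5$, so the circular distance is $2$. The labels are therefore $2$-separated, as required.

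\emph{Vertices at distance $2$.} We have $r_{i+2}-r_i\equiv 4\pmod 5$, so the circular distance is $1$. The labels are therefore distinct, i.e., $1$-separated.

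Vertices at distance at least $3$ impose no constraint when $k=2$. Hence $\ell$ is an $(m,2)$-radio labeling of span at most $5m-1$, which proves the bound.

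The only step requiring care is the circular-distance argument. The labels of adjacent vertices interleave across consecutive blocks of $5$, so one must compare residues modulo $5$ and not only within a single block. The observation $|c_x-c_y|\ge\delta(r_i,r_{i'})$ handles this uniformly, and no case analysis on $n$ or $m$ is needed.
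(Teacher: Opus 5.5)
Your proof is correct and uses exactly the paper's construction, $f(v_i)=L_{2i \bmod 5}$ with $L_j=\{5i+j : 0\le i\le m-1\}$, verified by the same residue-mod-$5$ argument. Your circular-distance bound $|c_x-c_y|\ge\delta(r_i,r_{i'})$ also spells out a step the paper leaves implicit: it goes from $(\beta-\alpha)\bmod 5=2$ to the required $2$-separation of adjacent labels without comment.
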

\begin{proof}

We partition the set $\{0,1\ldots, 5m-1\}$ into five subsets of size $m$ 

$$L_j=\{5i+j: i=0,\ldots, m-1\}, \qquad 0\le j\le 4.$$


Let $V(P_n)=\{v_0, v_1,\ldots, v_{n-1}\}$, with $v_i$ adjacent with $v_{i\pm 1}$. Define a labeling $f$ of $P_n$ by setting, for every $0\le i\le n-1$,
$$f(v_i)=L_{2i\bmod{5}}.$$
It is easy to see that $f$ is an $(m,2)$-radio labeling of $P_n$ of span $5m-1=\max(L_4)$. Indeed, if two labels $\alpha$ and $\beta$ are assigned to the same vertex, then by definition, they are $5$-separated; if they are given to two adjacent vertices, then $\alpha=5i+r$ for some $i$ and $0\le r\le 4$ and $\beta=5j+ (r+2) \bmod{5}$, therefore $(\beta -\alpha)\bmod{5} = r+2-r=2$; and if they are given to two vertices at distance 2, then $(\beta -\alpha)\bmod{5} = r+4-r=4$, hence $\alpha\ne\beta$.
\end{proof}

In order to see the sharpness of the above upper bound, we prove a lower bound for the path $P_n$ that coincides with the upper bound when $n$ is large enough.
\begin{prop}\label{pPinf}
For any $m\ge 1$, and sufficiently large $n$, $$\lambda_2^m(P_n) \ge 5 m -1.$$
\end{prop}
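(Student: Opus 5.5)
The plan is a density (packing) argument followed by an averaging step that uses the integrality of the span. Fix an $(m,2)$-radio labeling $\ell$ of $P_n$ with labels in $\{0,\ldots,s\}$, where $V(P_n)=\{v_0,\ldots,v_{n-1}\}$ as in Proposition~\ref{pPn2}. View $\ell$ as a set $S$ of $nm$ points $(i,c)$ in the grid $\{0,\ldots,n-1\}\times\{0,\ldots,s\}$, with $(i,c)\in S$ iff $c\in \ell(v_i)$. The constraints say exactly which pairs of distinct points cannot both lie in $S$. For the same column ($|i-i'|=0$) we need $|c-c'|\ge 3$. For adjacent columns ($|i-i'|=1$) we need $|c-c'|\ge 2$. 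For columns at distance two ($|i-i'|=2$) we need $c\ne c'$. The construction of Proposition~\ref{pPn2} has density exactly $1/5$, and the goal is to show that no admissible set is denser, up to boundary effects.

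\textbf{Step 1 (local density bound).} Assign to each point $P=(i,c)\in S$ a ``territory'' $T(P)$ of weight $5$. This will be either a set of five grid cells or, more flexibly, a fractional weighting of cells in a small window around $P$, with total weight $5$. The territories must satisfy a packing property: every cell of $\mathbb{Z}\times\mathbb{Z}$ receives total weight at most $1$ from all points of $S$. The natural candidates use three consecutive cells in the column of $P$, namely $(i,c),(i,c+1),(i,c+2)$, which are unavailable to other labels of $v_i$. The remaining weight $2$ is spread over cells of the neighbouring columns $i\pm1$ and $i\pm2$ at heights $c$ and $c\pm1$, since these are blocked by the adjacency and distance-two constraints. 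The verification is a finite case analysis over the relative position $(i'-i,c'-c)$ of two points of $S$ with $|i'-i|\le 2$ and $|c'-c|\le 2$. A naive integral tile such as $\{(i,c),(i,c+1),(i,c+2),(i+1,c),(i+1,c+1)\}$ fails: two adjacent-column points at vertical offset $2$ overlap. So I expect to need a symmetric fractional weighting, for instance weight $1/2$ on each of $(i\pm1,c)$, $(i\pm 1,c+1)$, possibly corrected by weights on columns $i\pm2$. The other option is a discharging argument over windows of $5$ consecutive vertices and a few consecutive labels. \emph{Finding the right weighting and checking it is the main obstacle.} I would search for it by writing the condition ``each cell has load $\le 1$'' as a small linear program over the offsets $(a,b)$ with $|a|\le 2$ and $|b|\le 2$, and take its dual to guide the choice.

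\textbf{Step 2 (counting).} All territories of points of $S$ lie in the strip $\{-2,\ldots,n+1\}\times\{-2,\ldots,s+2\}$. Summing the loads then gives
\[
5nm \le (n+C)(s+C)
\]
for an absolute constant $C$ coming from the boundary of the strip.

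\textbf{Step 3 (integrality).} Suppose, for contradiction, that $s\le 5m-2$. Then $s$ is bounded in terms of $m$ alone, and Step~2 gives
\[
5m \le (s+1) + \frac{O(m^2)}{n}.
\]
For $n$ sufficiently large compared with $m^2$, the error term is less than $1$. Since both sides are integers, this forces $5m\le s+1$, that is, $s\ge 5m-1$, contradicting $s\le 5m-2$. Hence $\lambda_2^m(P_n)\ge 5m-1$ for all sufficiently large $n$, which together with Proposition~\ref{pPn2} shows that the bound is tight.
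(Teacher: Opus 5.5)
\textbf{The proposal has a genuine gap, in Step~3, and it sits exactly where the difficulty of the proposition lies.} Dividing $5nm\le (n+C)(s+C)$ by $n$ gives $5m\le s+C+C(s+C)/n$, not $5m\le (s+1)+O(m^2)/n$.

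The boundary in the vertex direction (columns $-2,-1,n,n+1$) does cost only $O(s)=O(m)$ cells. The boundary in the label direction (rows $-2,-1,s+1,s+2$ of your strip) is different: it is a band of length $n$, so it costs $\Theta(n)$ cells and survives the division by $n$. For large $n$ you therefore only get $s\ge 5m-5$, and integrality gives nothing more.

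Better territories will not automatically cure this. Your packing bound is meant to hold for every admissible set, so it must tolerate the $7$-periodic pattern $\{0,3\},\emptyset,\{1\},\{3\},\{0\},\{2\},\emptyset$. That pattern is admissible, uses only the four labels $0,1,2,3$, and carries $6/7>4/5$ labels per vertex. Hence any weight-$5$ scheme with load at most $1$ must spill out of a four-label range by at least $2/7$ of a cell per column on average.

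To reach $s\ge 5m-1$ you would have to show that the spill is strictly less than one cell per column. Equivalently, you would have to show that losing one label from the range $\{0,\ldots,5m-2\}$ costs a strictly positive fraction of a label per vertex. That edge analysis is the real content of the statement, and it is absent from your plan. The construction of the territories in Step~1 is also left undone.

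The paper avoids territories and counts block by block, which incurs no boundary loss. It splits $\{0,\ldots,5m-2\}$ into $m-1$ blocks of five consecutive labels and the block $B_0=\{0,1,2,3\}$.
\begin{itemize}
\item In a five-block, a vertex gets at most two labels. If it gets two, some neighbour gets none, and that neighbour can be chosen injectively. So each such block carries at most $n$ labels in total; this is the sharp replacement for your Step~1.
\item For $B_0$, a finite case analysis shows that any seven consecutive vertices carry at most six of its labels, i.e.\ density at most $6/7<1$. The pattern above shows this is best possible.
\end{itemize}
Summing, the labeling uses at most $(m-\frac17)n+O(1)<mn$ labels for large $n$, which is a contradiction.
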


\begin{proof}
Assume, to the contrary, that there exists an $(m,2)$-radio labeling $\ell$ of $P_n$ of span $5m-2$. Partition the set of available colors into one block of size $4$ and $m-1$ blocks of size $5$:
\[
B_{0}=\{0,1,2,3\}
\]
and
\[
B_j=\{5j-1,5j,\ldots,5j+3\}, \qquad 1\le j\le m-1,
\]
For a block $B_j$, $1\le j\le m-1$, let
\[
n_t^j=\bigl|\{v_i:\ |\ell(v_i)\cap B_j|=t\}\bigr|,
\qquad t\in\{0,1,2\}.
\]

Since a vertex can receive at most two colors from $B_j$, we have
\[
n_0^j+n_1^j+n_2^j=n.
\]

Moreover, if a vertex receives two colors from $B_j$, then it must receive one of the pairs $\{5j-1,5j+2\}, \{5j,5j+3\}$, or $\{5j-1,5j+3\}$.

In each case, at least one neighboring vertex receives no color from $B_j$. If the pair $\{5j-1,5j+2\}$ or $\{5j,5j+3\}$ is given to some vertex, then both neighbors receive no color from $B_j$. Moreover, one can check that the pair $\{5j-1,5j+3\}$ cannot be given to two vertices at distance 1 or 2. Hence, one can associate injectively each vertex counted by $n_2^j$ with a distinct vertex counted by $n_0^j$, yielding
\[
n_2^j\le n_0^j.
\]

Therefore, the total number of colors from $B_j$ assigned to the vertices of $P_n$ is
\[
n_1^j+2n_2^j
\le n_1^j+n_2^j+n_0^j
=n.
\]
Consequently,
\[
\sum_{i=0}^{n-1} |\ell(v_i)\cap B_j|
\le n,
\qquad 1\le j\le m-1.
\]

Now consider the first block $B_0=\{0,1,2,3\}$.
Let
$a_i=|\ell(v_i)\cap B_{0}|$. We check all admissible configurations on seven consecutive vertices. First, such a configuration has at most three vertices having two colors from $B_{0}$. Table~\ref{tb2} shows all eleven configurations up to color isomorphism. $C_1$ is the only configuration with three vertices assigned two colors of $B_0$; $C_2-C_6$ are the five configurations with two vertices assigned two colors; $c_8-C_{10}$ are the only three configurations with exactly one vertex assigned two colors, and finally, $C_{11}$ is the only configuration with no vertex assigned two colors of $B_0$. We can observe from each configuration that every interval of seven consecutive vertices contains at most six colors from $B_{0}$, i.e., for any vertex $i$,  $0\le i\le n-7$,
\[
\sum_{t=i}^{i+6} a_t\le 6.
\]

It follows that
\[
\sum_{i=0}^{n-1} a_i=\sum_{i=0}^{n-1} |\ell(v_i)\cap B_{0}|
\le \frac67 n + O(1).
\]

Therefore, summing up the inequalities for $0\le j\le m-1$, we have 
\[
\sum_{j=0}^{m-1}\sum_{i=0}^{n-1} |\ell(v_i)\cap B_j| <\left(m-\frac17\right)n+O(1).
\]
For sufficiently large $n$, the right-hand side is strictly smaller than $mn$, a contradiction.
\end{proof}

\begin{table}[ht]
\begin{center}
\begin{tabular}{c|ccccccc}
$C_1$ & 0,3 & $\times$ & $\times$ & 0,3 & $\times$ & $\times$ & 0,3\\ 
$C_2$ & 0,3 & $\times$ & $\times$ & 0,3 & $\times$ & 1 & 3\\
$C_3$ & 0,3 & $\times$ & 1 & $\times$ & 0,3 & $\times$ & 1\\
$C_4$ & 0,3 & $\times$ & 1 & $\times$ & $\times$ & 0,3 & $\times$\\
$C_5$ & 0,3 & $\times$ & 1 & 3 & $\times$ & $\times$ & 0,3\\
$C_6$ & $\times$ & 0,3 & $\times$ & $\times$ & 0,3 & $\times$ & 1\\
$C_7$ & 0,3 & $\times$ & 1 & 3 & 0 & 2 & $\times$\\
$C_8$ & $\times$ & 0,3 & $\times$ & 1 & 3 & 0 & 2\\
$C_9$ & 2 & $\times$ & 0,3 & $\times$ & 1 & 3 & 0\\
$C_{10}$ & 0 & 2 & $\times$ & 0,3 & $\times$ & 1 & 3\\
$C_{11}$ & 1 & 3 & 0 & 2 & $\times$ & 3 & 0 
\end{tabular}
\end{center}
\caption{\label{tb2}The 11 configurations of colors from bloc $B_{0}$ on seven consecutive vertices (up to color isomorphism and where $\times$ means no color from $B_{0}$).}
\end{table}

We now turn our attention to the cycle, proving that for $k=2$ and under some conditions, the minimum span is the same or just one more than that of the path with the same order.
\begin{prop}
For any $m,p\ge 1$,  $$\lambda_2^m(C_{5p}) = 5m-1.$$
Moreover, for $2\le m\le 4$ and any $n\ge 28$, $\lambda_2^m(C_{n})\le 5m$.
\end{prop}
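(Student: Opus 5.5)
For $C_{5p}$, the upper bound comes from the construction in the proof of Proposition~\ref{pPn2}. Set $f(v_i)=L_{2i \bmod 5}$ with $L_j=\{5i+j: 0\le i\le m-1\}$. Since $n=5p$, the index $2i \bmod 5$ is periodic with period dividing $n$, so the rule is consistent around the cycle: the edge $v_{n-1}v_0$ and the distance-2 pairs $\{v_{n-2},v_0\}$ and $\{v_{n-1},v_1\}$ see the same residue differences ($\pm 2$ and $\pm 4 \bmod 5$) as interior pairs. Thus the separation argument from Proposition~\ref{pPn2} applies verbatim, and $\lambda_2^m(C_{5p})\le 5m-1$.

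For the lower bound, I would reuse the proof of Proposition~\ref{pPinf} on the cycle instead of extracting a long path. (For small cycles a long subpath is not available, and distances in $C_n$ can be shorter than in the path anyway.) Suppose there is a labeling of span $5m-2$ and take the blocks $B_0,\dots,B_{m-1}$. Both counting arguments are local, involving only windows of consecutive vertices, so on the cycle they hold with every index taken mod $n$. For each $j\ge1$, the injection from $n_2^j$-vertices to $n_0^j$-vertices still works and gives $\sum_i|\ell(v_i)\cap B_j|\le n$. For $B_0$, averaging the window bound $\sum_{t=i}^{i+6}a_t\le 6$ over all $n$ cyclic windows gives exactly $7\sum a_i\le 6n$, with no $O(1)$ error. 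Summing gives $mn\le (m-1)n+\tfrac67 n<mn$, a contradiction, for every $n$.

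Two small cases need care. When $n<7$, a cyclic window of seven vertices repeats vertices, so the table argument does not apply directly. When $n$ is very small, for instance $C_5$, vertices at cyclic distance $\le2$ are all pairs, and the constraints are only stronger. I would handle these directly: for $C_5$ all vertices are pairwise within distance $2$, and a direct count in $B_0$ shows that each color of $B_0$ can be used at most once among certain pairs, so $\sum a_i\le 4<5$. The same kind of blockwise count then gives the contradiction.

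For the second statement ($2\le m\le4$, $n\ge28$), I plan an explicit construction of span $5m$. Write $n=5a+6b$ with $a,b\ge0$; this is possible for all $n\ge 20$, and the bound $28$ presumably leaves room for the structures I need. Concatenate periodic segments of length $5$, which use the pattern $L_{2i\bmod 5}$ within span $5m-1$, with segments of length $6$ that use the extra label $5m$. The main obstacle is designing, for each $m\in\{2,3,4\}$, a length-6 gadget (or a single transition gadget inserted once, which is why $n$ must be large) with span $5m$ that glues to the period-5 pattern at both ends. I would find these gadgets by hand or computer search and then present them as tables, as in Figure~\ref{t1m1}, checking the distance-$1$ and distance-$2$ constraints across each junction. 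The restriction to $m\le4$ suggests that a single extra label absorbs the defect of one 6-segment only when $m$ is small, so I would expect one gadget per value of $m$, with verification by finite case check.
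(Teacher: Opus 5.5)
Your argument for $C_{5p}$ is correct, and it covers more than the paper's proof does. The paper only checks that $L_{2i\bmod 5}$ wraps around the cycle and leaves the lower bound implicit; Proposition~\ref{pPinf} would only give it for large $n$. Your cyclic version fills this in for every $p$:
the injection $n_2^j\le n_0^j$ needs no endpoint correction on a cycle;
the seven-vertex window bound survives because cyclic distances only add constraints;
averaging over the $n$ cyclic windows gives $7\sum_i a_i\le 6n$ exactly when $n\ge 7$;
and on $C_5$ all $5m$ labels must be distinct.
So $\lambda_2^m(C_{5p})\ge 5m-1$ holds for every $p$.

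The genuine gap is the second statement. You give no labeling of span $5m$, and saying you would find the gadgets by hand or computer search leaves the whole content of the claim unproved. Worse, the design you sketch cannot work for $m=3,4$. A length-$6$ segment that may be followed by a copy of itself is precisely an $(m,2)$-radio labeling of $C_6$, and $\lambda_2^m(C_6)\ge 6m-2$. To see this, note that in $C_6$ a label can occur on at most two vertices, which must be antipodal. Every vertex is then within distance $1$ of one of them, so $c-1$ and $c+1$ are unused. With labels in $\{0,\dots,s\}$, let $n_2$ be the number of labels used twice and $u$ the number unused. Counting the $6m$ slots gives $n_2-u=6m-s-1$. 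The map $c\mapsto c+1$ sends the doubly used labels other than $s$ injectively to unused ones, so $u\ge n_2-1$, hence $s\ge 6m-2>5m$ for $m\ge 3$.

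Consequently, with one gadget per $m$, two $6$-segments can never be adjacent. This forces $a\ge b$ in $n=5a+6b$, i.e.\ $n=11b+5c$, which misses $n=28,29,34,39$. A single inserted gadget only reaches $n\equiv 1\pmod 5$. For $m=2$ your idea does go through: the segment $\{0,6\},\{2,8\},\{4,10\},\{0,6\},\{2,8\},\{4,10\}$ can be placed between $\{4,9\}$ and $\{1,6\}$ of the period-$5$ pattern, and also next to itself.

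The paper avoids all of this by writing down explicit sequences of lengths $5$ and $8$ (Table~\ref{tbm2}). These share two consecutive sets (the first two for $m=2,3$, the last two for $m=4$), so blocks can be concatenated in any order. The threshold $28$ is exactly $5\cdot 8-5-8+1$, the point from which every $n$ has the form $5i+8j$; it is not room left for auxiliary structures.
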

\begin{proof}
First, for $C_{5p}$, one can use the same labeling as the one of $P_n$ given in the proof of Proposition~\ref{pPn2}, which is still an $(m,2)$-radio labeling of $C_{5p}$ since $L_4$ is 2-separated from $L_0$ and labels of $L_4$ and $L_1$ are pairwise distinct.

Now, for $2\le m\le 4$, a labeling $f$ of $C_n$ of span $5m$ can be defined by combining two sequences of $m$-subsets of $\{0,\ldots, 5m\}$: one of size 5 and the other of size 8, as described in Table~\ref{tbm2}. It can be seen that for $m=2$ or $3$, the first two sets of the sequence of length 5 are equal to the first two sets of the sequence of length 8. Therefore, the two sequences can be used by $f$ one after another in any order along the cycle. For $m=4$, it is similar, but for the last two subsets of each sequence. Hence, these sequences can be used one after the other in any order along the cycle. Moreover, since 5 and 8 are co-prime, any $n\ge 28=4\times 7$ can be written as $n=5i+8j$ for some $i,j$, then one can always combine $i$ sequences of length 5 and $j$ sequences of length 8 to produce an $(m,2)$-labeling of $C_n$.

\begin{table}[ht]
\begin{center}
\begin{tabular}{cc}
0 & 5   \\
2 & 7   \\
4 & 9   \\
1 & 6   \\
3 & 8  \\
- & -\\
0 & 5  \\
2 & 7  \\
4 & 10 \\
0 & 6  \\
2 & 9  \\
4 & 7  \\
1 & 10 \\
3 & 8 
\end{tabular}\hspace*{1.5cm}
\begin{tabular}{ccc}
0 & 5 & 10 \\
2 & 7 & 12 \\
4 & 9 & 14 \\
1 & 6 & 11 \\
3 & 8 & 13 \\
- & - & - \\
0 & 5  & 10 \\
2 & 7  & 12 \\
4 & 9  & 14 \\
0 & 6  & 11 \\
2 & 8  & 13 \\
4 & 10 & 15 \\
1 & 6  & 12 \\
3 & 8  & 14
\end{tabular}\hspace*{1.5cm}
\begin{tabular}{cccc}
0 & 5 & 10 & 16    \\
2 & 7 & 13 & 18    \\
4 & 9 & 15 & 20    \\
1 & 6 & 12 & 17    \\
3 & 8 & 14 & 19    \\
- & - & - & - \\
0 & 5 & 10 & 16 \\
2 & 7 & 12 & 18 \\
4 & 9 & 14 & 20 \\
0 & 6 & 11 & 16 \\
2 & 8 & 13 & 18 \\
4 & 10 & 15 & 20 \\
1 & 6 & 12 & 17 \\
3 & 8 & 14 & 19
\end{tabular}
\end{center}
\caption{\label{tbm2}$(m,2)$-radio labelings of $C_5$ and $C_8$ for $m=2,3$ and $4$.}
\end{table}
\end{proof}


Following the same idea as in Proof of Proposition~\ref{pPinf}, we can bound the $(m,3)$-radio number of the path:
\begin{prop}\label{pPn3}
For any $n\ge 1$ and $m\ge 1$,  $$\lambda_3^m(P_n) \le 8m-1.$$
\end{prop}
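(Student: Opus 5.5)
The plan is to mimic the construction in the proof of Proposition~\ref{pPn2}: cut the interval of colors into residue classes and walk along the path with a fixed step. Concretely, partition $\{0,1,\ldots,8m-1\}$ into the eight sets
$$L_j=\{8i+j:\ i=0,\ldots,m-1\},\qquad 0\le j\le 7.$$
With $V(P_n)=\{v_0,\ldots,v_{n-1}\}$, define
$$f(v_i)=L_{3i \bmod 8}.$$
Each $L_j$ has size $m$ and $\max L_7=8m-1$, so the span of $f$ is at most $8m-1$.

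It remains to check that $f$ is an $(m,3)$-radio labeling. The key observation is that for $\alpha\in L_r$ and $\beta\in L_{r'}$ we have $\beta-\alpha=8t+(r'-r)$ for some integer $t$. Hence $|\beta-\alpha|$ is at least the distance from $r'-r$ to the nearest multiple of $8$, which is at least the distance from any representative of $(r'-r)\bmod 8$ to $8\mathbb{Z}$.

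We then verify the four relevant distances:
\begin{itemize}
\item Same vertex: two distinct labels are $8$-separated, hence $4$-separated as required.
\item Distance $1$: $r'-r\equiv \pm 3 \pmod 8$, and the distance from $\pm3$ to $8\mathbb{Z}$ is $3$, so the labels are $3$-separated.
\item Distance $2$: $r'-r\equiv \pm 6\equiv \mp 2 \pmod 8$, so the labels are $2$-separated.
\item Distance $3$: $r'-r\equiv \pm 9\equiv \pm 1\pmod 8$, which is nonzero, so the labels are distinct, i.e.\ $1$-separated.
\end{itemize}
Vertices at distance at least $4$ impose no constraint.

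The only real step is the choice of the multiplier $3$ (equivalently $5$). It must satisfy three conditions: $\pm c$ lies at distance at least $3$ from $8\mathbb{Z}$, $\pm 2c$ at distance at least $2$, and $3c\not\equiv 0 \pmod 8$. Once $c=3$ is fixed, the verification is the routine modular computation above, exactly parallel to the proof of Proposition~\ref{pPn2}, so I do not expect a genuine obstacle.
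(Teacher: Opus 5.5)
Your proposal is correct and uses the same construction as the paper: residue classes $L_j$ modulo $8$ and $f(v_i)=L_{3i \bmod 8}$. The paper only says the verification goes ``as in the proof of Proposition~\ref{pPn2}'', and your residue check for distances $0$ through $3$ supplies exactly that omitted computation.
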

\begin{proof}

We partition the set $\{0,1\ldots, 8m-1\}$ into height $m$-subsets 

$$L_j=\{8i+j: i=0,\ldots, m-1\}, \qquad 0\le j\le 7.$$


Let $V(P_n)=\{v_0, v_1,\ldots, v_{n-1}\}$, with $v_i$ adjacent with $v_{i\pm 1}$. Define a labeling $f$ of $P_n$ by setting, for every $0\le i\le n-1$, 
$$f(v_i)=L_{3i\bmod{8}}.$$  As in the proof of Proposition~\ref{pPn2}, one can check that $f$ is a $(m,k)$-radio labeling of $P_n$ of span $8m-1=\max(L_7)$.
\end{proof}

\section{Concluding remarks}
As shown by the results and proofs of Sections 2, 3 and 4, computing the $(m,k)$-radio number of a graph turns out to be very difficult, even for simple graphs such as paths. 

In order to obtain a upper bound for the $(m,k)$-radio number of the path, the method used both in Propositions~\ref {pPn2} and~\ref{pPn3} could possibly be applied for any $k$: Let $s=\left\lceil\frac{(k+1)^2}{2}\right\rceil$, and partition the set $\{0,\ldots,sm-1\}$ into $s$ $m$-subsets $L_j=\{si+j: i=0,\ldots, m-1\}$, $0\le j\le s-1$. Then assign these sets to the vertices of the path by setting $f(v_i)=L_{di\bmod{s}}$ for some well-chosen $d$.
For $k=4$, it seems that $d=5$ produces an $(m,4)$-radio labeling, and for $k=5$, $d=5$ or $7$ seems okay too. But, as $k$ increases, finding the good value of $d$ and checking that the defined labeling is an $(m,k)$-radio labeling is getting harder.

Note also that, similarly to the case $k=2$, it may be possible to prove a lower bound for the $(m,3)$-radio number of the path that coincides with the upper bound for sufficiently large $n$, i.e., that  $\lambda_3^m(P_n) \ge 8m-1$, but this would probably require checking more configurations on a larger sequence of consecutive vertices.

Motivated by our results for paths when \(k\in\{1,2,3\}\), together with computational evidence for larger values of \(k\), we propose the following conjecture. 
\begin{conj}
For any $k,m \ge 1$, there exists an integer $n_0$ such that for any $n\ge n_0$,
$$\lambda_k^m(P_n) = \left\lceil\frac{(k+1)^2}{2}\right\rceil m -1.$$
\end{conj}

Corollary~\ref{c1} and Propositions~\ref{pPinf} and \ref{pPn2} prove the conjecture for $k=1$ and $2$ and the bound of Proposition~\ref{pPn3} coincides with the conjectured value for $k=3$.
Moreover, note that this conjecture generalizes the one given in 2007 by Kchikech, Khennoufa, and Togni for $m=1$~\cite{KKT07}, which is still open.

For the cycle, our results and initial computations suggest that for sufficiently large order, an $(m,k)$-radio labeling can be found using at most one additional color than for the path of the same order. We state this as a conjecture:
\begin{conj}
For any $k,m \ge 1$, there exists an integer $n_0$ such that for any $n\ge n_0$,
$$\lambda_k^m(C_n) \le \left\lceil\frac{(k+1)^2}{2}\right\rceil m.$$
\end{conj}

\section*{Declaration of generative AI technologies in the manuscript preparation process}
During the preparation of this work the author used ChatGPT to check for typos in the proofs and to improve the English formulation. After using this tool, the author reviewed and edited the content as needed and take full responsibility for the content of the published article.

\end{document}